\documentclass[11pt]{amsart}

\usepackage[pdftex]{graphicx}
\usepackage[a4paper,margin=2.5cm]{geometry}

\usepackage{amsfonts}
\usepackage{amsmath}
\usepackage{amssymb}
\usepackage{amsthm}
\usepackage{dsfont}
\usepackage{tikz}
\usepackage{array}

\usepackage{rotating}

\usepackage[T1]{fontenc}

\usepackage{paralist}
\usepackage{color}

\usepackage{overpic}

\usepackage[colorlinks,cite color=blue,pagebackref=true,pdftex]{hyperref}


\renewcommand\int{\mathsf{int}}

\newcommand\Emph[1]{\textbf{#1}}

\DeclareMathOperator{\sgn}{sgn}

\DeclareMathOperator{\Hom}{Hom}
\DeclareMathOperator{\Col}{Col}

\DeclareMathOperator{\Stab}{Stab}


\newtheorem{thm}{Theorem}[section]
\newtheorem{cor}[thm]{Corollary}
\newtheorem{lem}[thm]{Lemma}
\newtheorem{prop}[thm]{Proposition}

\theoremstyle{definition}

\newtheorem{example}[thm]{Example}

\newtheorem{rmk}{Remark}

\title[Order polynomials and P\'{o}lya's enumeration theorem]{Order polynomials and P\'{o}lya's enumeration theorem}

\author{Katharina Jochemko}
\address{Fachbereich Mathematik und Informatik, 
Freie Universit\"at Berlin, 
Germany}
\email{jochemko@math.fu-berlin.de}

\keywords{P\'{o}lya enumeration, group actions, partially ordered sets, order preserving maps, graph colorings}
\subjclass[2010]{06A07, 06A11, 05A19 , 05C15, 05C31, 05E18}

\date{\today}
\thanks{The author was supported by a \emph{Hilda Geiringer Scholarship} at
the Berlin Mathematical School. }

\parindent=0pt
\parskip=5pt

\begin{document}
\begin{abstract}
P\'{o}lya's enumeration theorem states that the number of labelings of a finite set up to symmetry is given by a polynomial in the number of labels. We give a new perspective on this theorem by generalizing it to partially ordered sets and order
preserving maps. Further we prove a reciprocity statement in terms of strictly order preserving maps generalizing a classical result by Stanley (1970). We apply our results to counting graph colorings up to symmetry.
\end{abstract}

\maketitle

\section{Introduction}
Counting objects up to symmetry is a basic problem of enumerative combinatorics. A fundamental result in this context is P\'{o}lya's enumeration theorem which is concerned with counting \textbf{labelings} of a set of objects modulo symmetry. Here a labeling of a set $X$ is defined as a map $f\colon X\rightarrow Y$ where $Y$ is the set of labels. If $G$ is a group acting on $X$ then $G$ also acts on the set of labelings $Y^X := \lbrace f\colon X\rightarrow Y \rbrace $. P\'{o}lya's enumeration theorem now states:
\begin{thm}[P\'{o}lya's enumeration theorem \cite{polya1937kombinatorische}]
Let $G$ be a finite group acting on a finite set $X$ and let $Y$ be a finite set of $n=\left|Y\right|$ labels. Then
\[
\left| {Y^X}/G \right| =\frac{1}{|G|}\sum _{g\in G} n^{c(g)}
\]
where ${Y^X}/G$ is the collection of orbits of ${Y^X}$ and $c(g)$ is the number of cycles of $g$ as permutation of $X$.
\end{thm}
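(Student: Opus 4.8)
The plan is to derive the formula from the orbit-counting lemma (the Cauchy--Frobenius--Burnside lemma) applied to the induced $G$-action on the set of labelings $Y^X$, together with an elementary count of labelings fixed by a single group element.

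First I would recall the orbit-counting lemma: for a finite group $G$ acting on a finite set $S$, one has $|S/G| = \frac{1}{|G|}\sum_{g\in G}|S^g|$, where $S^g = \{s\in S : g\cdot s = s\}$ is the fixed-point set of $g$. If this is not available as a citable prerequisite, I would prove it by the standard double count of the incidence set $\{(g,s)\in G\times S : g\cdot s = s\}$: counting by $g$ gives $\sum_{g\in G}|S^g|$, while counting by $s$ gives $\sum_{s\in S}|\Stab(s)|$; since $|\Stab(s)| = |G|/|G\cdot s|$ by the orbit--stabilizer theorem, collecting the terms $s$ orbit by orbit yields $|G|\cdot|S/G|$.

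Next I would specialize to $S = Y^X$, where $G$ acts by $(g\cdot f)(x) = f(g^{-1}x)$. The key observation is that $f\in Y^X$ is fixed by $g$ if and only if $f$ is constant on every orbit of the cyclic subgroup $\langle g\rangle$ acting on $X$, equivalently constant on each cycle of $g$ viewed as a permutation of $X$ (fixed points counting as $1$-cycles). Since these cycles partition $X$, choosing a label for each cycle independently sets up a bijection between $(Y^X)^g$ and the set of maps from the cycles of $g$ to $Y$, whence $|(Y^X)^g| = n^{c(g)}$.

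Substituting this into the orbit-counting formula gives $|Y^X/G| = \frac{1}{|G|}\sum_{g\in G} n^{c(g)}$, as claimed. The only point requiring care is the identification of fixed labelings with assignments of labels to cycles, but this is immediate from the fact that the cycles of $g$ form a set partition of $X$; I therefore do not anticipate a genuine obstacle, the mathematical content being concentrated in the orbit-counting lemma, whose proof is elementary.
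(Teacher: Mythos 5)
Your proof is correct, and it is essentially the argument the paper uses: the paper obtains this statement as the antichain specialization of Theorem \ref{thm:poly_poset} (Corollary \ref{cor:P\'{o}lya}), whose proof is exactly your combination of Burnside's lemma with the observation that a labeling is fixed by $g$ if and only if it is constant on the cycles $[x]_g$, giving $\left|\left(Y^X\right)^g\right|=n^{c(g)}$. The only difference is packaging: the paper phrases the fixed-point count as the bijection $\Hom(P,[n])^g\to\Hom(P_g,[n])$ with $P$ an antichain, while you state it directly for $Y^X$.
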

We give a new perspective on this theorem by generalizing it in terms of partially ordered sets, or \textbf{posets} for short, and order preserving maps. More precisely, we consider a finite poset $P$ and a group $G$ acting on $P$ by \textbf{automorphisms}. Then $G$ acts in a natural way also on the set of all order preserving maps $\Hom (P,[n])$ from $P$ into the $n$-chain $[n]=\lbrace 1<\cdots <n\rbrace$. We show that the number of \textbf{orbits} of $\Hom (P,[n])$ is given by a polynomial $\Omega _{P,G} (n)$ which we call the \textbf{orbital order polynomial}. P\'{o}lya's enumeration theorem then follows by specializing this result to antichains. Further we give a combinatorial interpretation for $\Omega _{P,G} (-n)$ in terms of orbits of strictly order preserving maps. This naturally generalizes the classical polynomiality and reciprocity theorems for order preserving maps due to Stanley \cite{stanley1970chromatic}. 

The results can be applied to graph colorings. We consider a finite group $G$ acting by automorphisms on a finite simple graph $\Gamma=(V,E)$ and the function $\chi _{\Gamma,G}(n)$ counting proper colorings $c\colon V\rightarrow [n]$ up to group action. Cameron and Kayibi \cite{cameron2007orbital} seem to be the first who considered this function which they called the \textbf{orbital chromatic polynomial}. Previously, Hanlon \cite{hanlon1985chromatic} treated the case of $G$ being the automorphism group of $\Gamma$. It is easy to see that $\chi _{\Gamma,G}(n)$ indeed agrees with a polynomial for all $n\geq 1$. We further give a representation as a sum of order polynomials.  
 
We also give a combinatorial interpretation for evaluating this polynomial at negative integers in terms of acyclic orientations and compatible colorings. This naturally generalizes Stanley's reciprocity for graph colorings \cite{stanley1973acyclic}.

\textbf{Acknowledgments.} We are grateful to Raman Sanyal for suggesting this project and constant support. We also want to thank Alexandru Chirvasitu and Yan X Zhang for fruitful conversations, and Darij Grinberg for careful reading and helpful comments.

\section{Order preserving P\'{o}lya-enumeration}
\subsection{Groups}
Let $G$ be a finite group with identity element $e$ and $X$ be a finite set. A \textbf{group operation} of $G$ on $X$ is a map $\cdot \colon G \times X \rightarrow X$ such that $g\cdot (h\cdot x)=(gh)\cdot x$ and $e\cdot x=x$ for all $g,h\in G$ and $x\in X$. We say that $G$ operates or \textbf{acts} on $X$. For every $g\in G$ we denote by $X^g$ the \textbf{fixpoints} of $g$, i.e.,\ $X^g=\lbrace x \in X \colon g\cdot x=x\rbrace$. For an element $x\in X$ we denote by $Gx=\lbrace g\cdot x \colon g\in G \rbrace $ the \textbf{orbit} of $x$. The set of all orbits partitions $X$ and is called $X/G$.  

Burnside's lemma (see e.g. \cite[Theorem 10.5]{MR1871828}) gives a formula for the number of orbits in terms of fixpoints:
\begin{thm}[Burnside's lemma]\label{thm:burnside}
Let $G$ be a finite group acting on a finite set $X$. Then
\[
\left| X/G \right| \ = \ \frac{1}{|G|}\sum _{g\in G} |X^g|.
\]
\end{thm}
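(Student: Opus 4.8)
The plan is to prove Theorem~\ref{thm:burnside} by the standard double-counting argument applied to the set of \emph{fixed incidences}
\[
F \ = \ \set{(g,x)\in G\times X}{g\cdot x = x}.
\]
Counting $F$ by projecting onto its first coordinate gives immediately $|F| = \sum_{g\in G}|X^g|$, which is the quantity appearing on the right-hand side (up to the factor $|G|$). The remainder of the argument reorganizes the same cardinality so that the number of orbits appears on the other side.

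Next I would count $F$ by projecting onto its second coordinate. For $x\in X$ write $\Stab(x) = \set{g\in G}{g\cdot x = x}$; this is a subgroup of $G$, and the fibre of $F\to X$ over $x$ is exactly $\Stab(x)$, so $|F| = \sum_{x\in X}|\Stab(x)|$. The one substantive input needed is the orbit–stabilizer relation $|\Stab(x)|\cdot|Gx| = |G|$. I would establish it by checking that the assignment $g\Stab(x)\mapsto g\cdot x$ is a well-defined bijection between the left cosets $G/\Stab(x)$ and the orbit $Gx$: it is well defined and injective since $g\cdot x = h\cdot x$ holds if and only if $h^{-1}g\in\Stab(x)$, i.e.\ $g\Stab(x) = h\Stab(x)$, and it is surjective by the definition of the orbit. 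Hence $|Gx| = |G|/|\Stab(x)|$.

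Substituting, $|F| = \sum_{x\in X}|G|/|Gx|$. Since $|Gx|$ depends only on the orbit of $x$ and equals the size of that orbit, I would group the sum by orbits: for each orbit $O\in X/G$ one gets $\sum_{x\in O}|G|/|Gx| = |O|\cdot|G|/|O| = |G|$, and therefore $|F| = |G|\cdot|X/G|$. Comparing this with the first count of $|F|$ and dividing by $|G|$ yields the stated formula. I do not expect a genuine obstacle here: every step is elementary finite group theory, and the only place requiring a small argument is the coset-to-orbit bijection above, which is routine.
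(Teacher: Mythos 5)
Your proof is correct and complete. Note that the paper itself does not prove Theorem~\ref{thm:burnside} at all --- it is quoted as a known result with a reference to a textbook --- so there is no in-paper argument to compare against; your double count of the incidence set $\set{(g,x)\in G\times X}{g\cdot x=x}$, combined with the coset-to-orbit bijection giving $|\Stab(x)|\cdot|Gx|=|G|$ and the regrouping of $\sum_{x\in X}|G|/|Gx|$ by orbits, is exactly the classical proof one would find in such a reference, and every step you give is sound.
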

For an element $x\in X$ the \textbf{stabilizer} of $x$ is $\Stab (x)=\lbrace g\in G \colon g\cdot x =x \rbrace$. 
The operation can be restricted to any subgroup $H\subseteq G$. For $g\in G$ we denote by $\langle g\rangle =\lbrace e,g,g^2,\ldots\rbrace$ the cyclic subgroup generated by $g$. The orbit of $x$ under the action of $\langle g \rangle$ is denoted by $[x]_g$ and is called a \textbf{cycle} of $g$. In particular, $c(g)=\left| X/\langle g\rangle\right|$ is the number of cycles of $g$. Identifying $g$ with the corresponding permutation on $X$ gives the usual notion of cycles.
\begin{example}
Let $G=S_n$ be the symmetric group acting on $[n]=\lbrace 1,\ldots ,n\rbrace$ in the usual way. Every permutation $\sigma \in S_n$ can be written as product of disjoint cycles in $S_n$ and this representation is unique up to interchanging the order of the cycles in the product. In this case $[x]_{\sigma} =\lbrace y\in [n]\colon y \text{ and } x \text{ are in the same cycle}\rbrace$ and $c(\sigma)$ is the number of cycles in the unique representation as product of disjoint cycles.
\end{example}
If a set $X$ has additional structure we say that a group $G$ acts by \textbf{automorphisms} on $X$ if the group operation respects the structure, that is, for all $g\in G$ the map $x\mapsto g\cdot x$ is a \textbf{structure preserving bijection}. 

Let $P$ be a finite \textbf{poset}. Then a group $G$ acts on $P$ by automorphisms if for all $g\in G$ and for all $p$ and $q$ in $P$ we have $g\cdot p \prec g \cdot q$ whenever $p\prec q$.

For a finite simple \textbf{graph} $\Gamma=(V,E)$ an action of a group $G$ on $V$ respects the structure of $\Gamma$ if for all edges $uv\in E$ we have that there is an edge between $g\cdot u$ and $g\cdot v$ for all $g\in G$.

The operation of $G$ on $X$ induces an operation on $Y^X$. This \textbf{induced operation} is defined by $(g \cdot f)(x)=f(g^{-1} x)$ for all $g \in G$, $f\in Y^X$ and $x \in X$.
\subsection{Order preserving maps}
Let $P$ be a finite poset. A map $\phi \colon P\rightarrow [n]$ is called \textbf{order preserving} if $\phi (p)\leq \phi (q)$ whenever $p\prec q$ and equality is forbidden for \textbf{strictly} order preserving maps. We denote the set of all order preserving maps by $\Hom (P,[n])$, and the set of strictly order preserving maps by $\Hom ^{\circ} (P,[n])$. Their cardinalities are given by the \textbf{order polynomials} $\Omega _P (n)$ and $\Omega ^{\circ} _P (n)$ respectively. The following classical result is due to Stanley \cite{stanley1970chromatic}.
\begin{thm}[Stanley \cite{stanley1970chromatic}]\label{thm:stan_ord}
For a finite poset $P$ the function $\Omega _P (n)$ agrees with a polynomial of degree $\left|P\right|$ for all $n\geq 1$, and
\[
\Omega ^{\circ} _P (n)=(-1)^{\left|P\right|}\Omega _P (-n).
\]
\end{thm}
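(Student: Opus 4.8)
The plan is to reduce both $\Omega_P(n)$ and $\Omega^{\circ}_P(n)$ to sums over the linear extensions of $P$, following Stanley's theory of $(P,\omega)$-partitions. Fix once and for all a \textbf{natural labeling} $\lambda\colon P\to[p]$, i.e.\ an order preserving bijection onto $[p]=\{1<\cdots<p\}$ where $p:=|P|$; such a labeling exists because $P$ is finite. Record each linear extension of $P$ as the word $\omega=w_1w_2\cdots w_p$ listing the elements of $P$ in the order prescribed by the extension, and let
\[
\Des(\omega)\ :=\ \set{i\in\{1,\dots,p-1\}}{\lambda(w_i)>\lambda(w_{i+1})}
\]
be its descent set. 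Write $\ordC(P)$ for the finite set of all linear extensions of $P$.

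The core of the argument is a combinatorial decomposition of $\Hom(P,[n])$. To each $\phi\in\Hom(P,[n])$ associate the word $\omega(\phi)$ obtained by listing the elements of $P$ in order of increasing $\phi$-value and breaking ties in order of increasing $\lambda$-value; because $\phi$ is order preserving and $\lambda$ is compatible with $P$, the word $\omega(\phi)$ is a linear extension of $P$. One then checks that the fiber of $\phi\mapsto\omega(\phi)$ over $\omega=w_1\cdots w_p$ is exactly
\[
A_\omega\ :=\ \set{\phi\in\Hom(P,[n])}{\phi(w_1)\le\cdots\le\phi(w_p)\ \text{and}\ \phi(w_i)<\phi(w_{i+1})\ \text{for every}\ i\in\Des(\omega)},
\]
since within a fiber a tie $\phi(w_i)=\phi(w_{i+1})$ can occur only at an ascent of $\omega$ (the tie-break places the smaller $\lambda$-label first), so strictness is forced precisely at the descents. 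Hence $\Hom(P,[n])=\bigsqcup_{\omega\in\ordC(P)}A_\omega$. Running the same argument for $\Hom^{\circ}(P,[n])$, but breaking ties by \emph{decreasing} $\lambda$-value, gives $\Hom^{\circ}(P,[n])=\bigsqcup_{\omega\in\ordC(P)}A^{\circ}_\omega$, where $A^{\circ}_\omega$ is the set of $\phi\in\Hom^{\circ}(P,[n])$ with $\phi(w_1)\le\cdots\le\phi(w_p)$ that are strictly increasing at every \emph{ascent} $i\notin\Des(\omega)$ of $\omega$.

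It remains to count each block. The number of integer sequences $1\le a_1\le\cdots\le a_p\le n$ in which $d$ prescribed consecutive pairs are forced to be strictly increasing equals $\binom{n-d+p-1}{p}$: subtracting from $a_i$ the number of prescribed strict steps occurring before position $i$ turns the constraints into $1\le b_1\le\cdots\le b_p\le n-d$, and the latter sequences number $\binom{n-d+p-1}{p}$. Applying this to $A_\omega$ with $d=|\Des(\omega)|$, and to $A^{\circ}_\omega$ with $d=(p-1)-|\Des(\omega)|$ (the number of ascents of $\omega$), yields
\[
\Omega_P(n)=\sum_{\omega\in\ordC(P)}\binom{n-|\Des(\omega)|+p-1}{p}\qquad\text{and}\qquad\Omega^{\circ}_P(n)=\sum_{\omega\in\ordC(P)}\binom{n+|\Des(\omega)|}{p}.
\]
Each summand is a polynomial in $n$ of degree $p$ with leading coefficient $1/p!>0$, so $\Omega_P(n)$ agrees for all $n\ge1$ with a polynomial of degree exactly $|P|$ (leading coefficient $|\ordC(P)|/p!$), and likewise for $\Omega^{\circ}_P$. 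Finally, the polynomial identity $\binom{-n}{p}=(-1)^p\binom{n+p-1}{p}$, applied term by term to the first formula, gives $\Omega_P(-n)=(-1)^{|P|}\sum_{\omega\in\ordC(P)}\binom{n+|\Des(\omega)|}{p}=(-1)^{|P|}\Omega^{\circ}_P(n)$, which is the asserted reciprocity.

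The step I expect to be the main obstacle is the verification of the two decompositions: one must check that each tie-breaking convention really outputs a linear extension of $P$ — this is exactly where order-preservation, resp.\ strict order-preservation, of $\phi$ together with the compatibility of $\lambda$ with $P$ enters — and that the fibers are cut out precisely by the stated inequalities, with the roles of "ascent" and "descent" correctly interchanged between the weak and the strict case. A cleaner but less self-contained alternative sidesteps this bookkeeping: $\Omega_P(n)$ equals (a shift of) the Ehrhart polynomial of the order polytope $\ord(P)=\set{x\in[0,1]^P}{x_p\le x_q\ \text{whenever}\ p\preceq q}$, a lattice polytope of dimension $|P|$, and the three assertions then follow from Ehrhart's theorem together with Ehrhart--Macdonald reciprocity, after identifying strictly order preserving maps $P\to[n]$ with the interior lattice points of $(n+1)\,\ord(P)$.
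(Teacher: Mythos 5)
Your proof is correct, but note that the paper itself offers no argument for this statement: it is quoted as a classical result and attributed to Stanley, so the only comparison to make is with the literature, and your argument is essentially Stanley's original $(P,\omega)$-partition proof. The decomposition of $\Hom(P,[n])$ (resp.\ $\Hom^{\circ}(P,[n])$) into blocks indexed by linear extensions, with strictness forced at descents (resp.\ ascents) of the word relative to a natural labeling, is exactly the standard route, and your counting lemma and the term-by-term use of $\binom{-m}{p}=(-1)^p\binom{m+p-1}{p}$ correctly yield the degree statement and the reciprocity. One small point you gloss over: to count $A^{\circ}_\omega$ by the chain-of-inequalities lemma you need that every sequence $1\le a_1\le\cdots\le a_p\le n$ which is strict at the ascents of $\omega$ actually defines a \emph{strictly} order preserving map $w_i\mapsto a_i$; this is where naturality of $\lambda$ is used (a tie $a_i=a_j$ with $w_i\prec w_j$ would force descents all along the block, contradicting $\lambda(w_i)<\lambda(w_j)$), whereas for $A_\omega$ the analogous surjectivity is immediate. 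That check is routine and does not affect correctness; the geometric alternative you sketch via Ehrhart--Macdonald reciprocity for order polytopes is also valid and is the route the paper itself alludes to in its closing remark of Section 2.
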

 For every finite group $G$ acting on $P$ by automorphisms and for every $g\in G$ we define a partial order on $P_g=P/\langle g\rangle$ by defining $[x]_g \prec [y]_g$ whenever there are   $\tilde{x} \in [x]_g$ and $\tilde{y} \in [y]_g$ such that $\tilde{x}\prec \tilde{y}$.\\
This, in fact, yields a poset, the \textbf{quotient poset} (see e.g. \cite{stanley1984quotients}).
\begin{lem}
Let $P$ be a finite poset and $G$ a finite group acting by automorphisms on $P$. Then $P_g$ is a poset with $c(g)$ elements for all $g\in G$.
\end{lem}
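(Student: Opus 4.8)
The plan is to check the poset axioms for $P_g$ one at a time, the cardinality claim being immediate: since $P_g$ is \emph{defined} to be the set of orbits $P/\langle g\rangle$, it has exactly $|P/\langle g\rangle|=c(g)$ elements by the definition of $c(g)$, so there is nothing to do there. The associated relation $\preceq$ (put $[x]_g\preceq[y]_g$ if $[x]_g\prec[y]_g$ or $[x]_g=[y]_g$) is reflexive by construction, and it is well defined as a relation on orbits because it is phrased purely through existence of representatives. Hence the actual content is transitivity and antisymmetry of the relation $\prec$ on $P_g$, where $[x]_g\prec[y]_g$ means that there are $\tilde x\in[x]_g$ and $\tilde y\in[y]_g$ with $\tilde x\prec\tilde y$ in $P$.

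For transitivity, suppose $[x]_g\prec[y]_g$ and $[y]_g\prec[z]_g$, witnessed by $\tilde x\prec\tilde y$ and $\tilde y'\prec\tilde z$ in $P$ with $\tilde y,\tilde y'$ in the same $\langle g\rangle$-orbit, say $\tilde y'=g^k\cdot\tilde y$. The key observation is that $g^k$ is again an automorphism of $P$, so applying it to $\tilde x\prec\tilde y$ gives $g^k\cdot\tilde x\prec g^k\cdot\tilde y=\tilde y'$; combining with $\tilde y'\prec\tilde z$ and transitivity in $P$ yields $g^k\cdot\tilde x\prec\tilde z$, which witnesses $[x]_g\prec[z]_g$ because $g^k\cdot\tilde x\in[x]_g$.

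For antisymmetry I would first reduce to irreflexivity of $\prec$ on $P_g$: if $[x]_g\prec[y]_g$ and $[y]_g\prec[x]_g$ with $[x]_g\ne[y]_g$, then the transitivity just proved forces $[x]_g\prec[x]_g$, so it is enough to rule this out. Suppose therefore that $\tilde x\prec\tilde x'$ in $P$ with $\tilde x,\tilde x'$ in one $\langle g\rangle$-orbit; write $\tilde x'=g^k\cdot\tilde x$, and note $g^k\cdot\tilde x\neq\tilde x$ since $\prec$ is strict. Iterating the automorphism $g^k$ produces a chain $\tilde x\prec g^k\cdot\tilde x\prec g^{2k}\cdot\tilde x\prec\cdots$ in $P$; because $\langle g\rangle$ is finite, some iterate satisfies $g^{mk}\cdot\tilde x=\tilde x$ with $m\ge1$, and transitivity in $P$ then gives $\tilde x\prec\tilde x$, contradicting that $P$ is a poset.

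The main obstacle is exactly this last step: it is the one place where finiteness of $\langle g\rangle$ is genuinely used, and the only axiom whose verification is not a one-line manipulation — everything else works provided one remembers to apply powers $g^k$ of $g$ rather than $g$ alone. I would also note in passing that this is the classical fact that the quotient of a finite poset by a group acting by automorphisms is again a poset (cf.\ \cite{stanley1984quotients}), here specialized to the cyclic group $\langle g\rangle$.
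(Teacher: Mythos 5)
Your proof is correct and follows essentially the same route as the paper: the transitivity argument (applying a power $g^k$, which is an automorphism, to transport one witness onto the other) is identical, and your irreflexivity step is exactly the paper's observation that each orbit $[x]_g$ is an antichain, proved by the same iteration-plus-finite-order argument. The only difference is organizational: the paper establishes the antichain property first and derives antisymmetry directly from it, whereas you prove transitivity first and use it to reduce antisymmetry to irreflexivity, which amounts to the same thing.
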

\begin{proof}
First we observe that every cycle $[x]_g$ is an antichain. For that suppose $g^l x\prec g^k x$ for some $k,l \geq 0$ and consequently $x\prec g^{k-l}x$. From that we get $x\prec g^{k-l}x \prec g^{2(k-l)}x\prec \cdots \prec x$ as $g^{k-l}$ has finite order which is a contradiction.

For antisymmetry assume there are $\tilde{x},\bar{x} \in [x]_g$ and $\tilde{y},\bar{y} \in [y]_g$ with $\tilde{x}\prec \tilde{y}$ and $\bar{y}\prec \bar{x}$. Then $\bar{y}=g^l\tilde{y}$ for some $l\geq 0$ and we have $g^l \tilde{x}\prec g^l \tilde{y}=\bar{y}\prec \bar{x}$ which contradicts $[x]_g$ being an antichain.

For transitivity we let $\tilde{x}\in [x]_g$, $\tilde{y},\bar{y} \in [y]_g$ and $\bar{z}\in [z]_g$ with $\tilde{x} \prec \tilde{y}$ and $\bar{y} \prec \bar{z}$. Then there exists a $k\geq 0$ with $g^k \tilde{y}=\bar{y}$ and we have $g^k \tilde{x} \prec g^k \tilde{y}=\bar{y}\prec \bar{z}$.
\end{proof}
An order preserving action of $G$ on $P$ induces an action on $\Hom (P,[n])$ and $\Hom ^{\circ} (P,[n])$ as subsets of $[n]^P$.  For $n\geq 1$ we define the \textbf{orbital order polynomials} $\Omega _{P,G}(n)=\left| \Hom (P,[n])/G \right|$  and $\Omega _{P,G}^{\circ}(n)=\left| \Hom ^{\circ}(P,[n])/G \right|$. The following main theorem states that $\Omega _{P,G}(n)$ and $\Omega ^{\circ}_{P,G}(n)$ are indeed polynomials for $n\geq 1$, and gives formulas in terms of order polynomials:
\begin{thm}\label{thm:poly_poset}
Let $G$ be a finite group acting by automorphisms on a finite poset $P$. Then 
\begin{eqnarray}
\Omega _{P,G}(n) &=&\frac{1}{|G|}\sum _{g\in G} \Omega _{P_g} (n),\label{eqn:poly_poset}\\
\Omega ^{\circ}_{P,G}(n) &=&\frac{1}{|G|}\sum _{g\in G} \Omega ^{\circ}_{P_g} (n)\label{eqn:poly_posetstrict}
\end{eqnarray}
for $n\geq 1$. In particular, $\Omega _{P,G}(n)$ and $\Omega ^{\circ}_{P,G}(n)$ agree with polynomials of degree $|P|$ for $n\geq 1$.
\end{thm}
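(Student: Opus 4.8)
The plan is to combine Burnside's lemma (Theorem~\ref{thm:burnside}) with an explicit description of the fixpoint sets of the induced $G$-action. Applying Theorem~\ref{thm:burnside} to the action of $G$ on $\Hom(P,[n])$ gives $\Omega_{P,G}(n)=\frac{1}{|G|}\sum_{g\in G}|\Hom(P,[n])^g|$, and similarly for $\Hom^\circ(P,[n])$. So the whole argument reduces to the two identities
\[
|\Hom(P,[n])^g|=\Omega_{P_g}(n),\qquad |\Hom^\circ(P,[n])^g|=\Omega^\circ_{P_g}(n)
\]
for every $g\in G$ and $n\ge1$; substituting these into Burnside's formula yields \eqref{eqn:poly_poset} and \eqref{eqn:poly_posetstrict}.

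To prove the identities I would exhibit a bijection. A map $\phi\in[n]^P$ satisfies $g\cdot\phi=\phi$ exactly when $\phi(x)=\phi(gx)$ for all $x\in P$, that is, when $\phi$ is constant on every cycle $[x]_g$; such a $\phi$ therefore factors uniquely as $\phi=\bar\phi\circ\pi$ through the quotient map $\pi\colon P\to P_g$, $x\mapsto[x]_g$, and $\phi\mapsto\bar\phi$ is a bijection from the $g$-fixed maps in $[n]^P$ onto $[n]^{P_g}$. It remains to check that under this bijection $\phi$ is order preserving on $P$ if and only if $\bar\phi$ is order preserving on $P_g$, and likewise with ``order preserving'' replaced by ``strictly order preserving''. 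For the forward direction, let $p\prec q$ in $P$; since every cycle is an antichain (as shown in the proof of the preceding lemma) we have $[p]_g\neq[q]_g$, hence $[p]_g\prec[q]_g$ in $P_g$, and thus $\phi(p)=\bar\phi([p]_g)\le\bar\phi([q]_g)=\phi(q)$, with strict inequality when $\bar\phi$ is strictly order preserving. For the converse, if $[x]_g\prec[y]_g$ in $P_g$ then by definition there are representatives $\tilde x\prec\tilde y$, so $\bar\phi([x]_g)=\phi(\tilde x)\le\phi(\tilde y)=\bar\phi([y]_g)$, again strictly when $\phi$ is strictly order preserving.

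For the polynomiality claim I would invoke Stanley's Theorem~\ref{thm:stan_ord}: for each $g$ the function $\Omega_{P_g}(n)$ agrees, for $n\ge1$, with a polynomial of degree $|P_g|=c(g)$, and $\Omega^\circ_{P_g}(n)=(-1)^{c(g)}\Omega_{P_g}(-n)$ is a polynomial of the same degree. Hence the right-hand sides of \eqref{eqn:poly_poset} and \eqref{eqn:poly_posetstrict} are polynomials in $n$, and their degree is exactly $|P|$: every summand has degree $c(g)\le c(e)=|P|$, with equality precisely when $g$ acts trivially on $P$ (so $P_g=P$), and since $\Omega_P$ and $\Omega^\circ_P$ have positive leading coefficient and at least $g=e$ acts trivially, the top-degree terms do not cancel. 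I expect the proof to be routine; the one step that needs genuine care is the forward direction in the strict case, where the antichain property of the cycles $[x]_g$ is exactly what is needed to rule out $[p]_g=[q]_g$ and hence to pass from $p\prec q$ to $[p]_g\prec[q]_g$.
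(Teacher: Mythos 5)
Your proposal is correct and follows essentially the same route as the paper: apply Burnside's lemma to the action on $\Hom(P,[n])$ (resp.\ $\Hom^\circ(P,[n])$), identify the $g$-fixed maps with $\Hom(P_g,[n])$ (resp.\ $\Hom^\circ(P_g,[n])$) via the quotient map, and get polynomiality and the degree claim from Stanley's theorem together with the observation that $c(g)=|P|$ exactly when $g$ acts trivially. In fact you spell out details the paper leaves implicit, notably the verification that the bijection respects (strict) order preservation using the antichain property of the cycles.
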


\begin{proof}
We only show equation \eqref{eqn:poly_poset} as the argument for equation \eqref{eqn:poly_posetstrict} is analoguous. By Theorem \ref{thm:burnside} we have 
\[
\left| \Hom (P,[n])/G \right| =\frac{1}{|G|}\sum _{g\in G} \left|\Hom (P,[n]) ^g\right|
\]
By definition, $\phi \in \Hom (P,[n])^g$ if and only if $\phi (g^{-1} x)=\phi (x)$ for all $x\in P$. But this is equivalent to $\phi$ being constant on $[x]_g$. Therefore 
\begin{eqnarray*}
\Hom (P,[n])^g &\rightarrow & \Hom (P_g,[n])\\
\varphi &\mapsto &([x]_g \mapsto \varphi (x))
\end{eqnarray*}
is a one-to-one correspondence. Further, observe that  $\deg \Omega _{P_g} (n)=c(g)=|P|$ if and only if $g$ acts trivially on $P$, and $c(e)=|P|$.
\end{proof}

By applying Theorem \ref{thm:poly_poset} to antichains, we get P\'{o}lya's enumeration theorem in the language of posets:
\begin{cor}\label{cor:P\'{o}lya}
Let $G$ be a finite group acting by automorphisms on a finite antichain $A$, and let $Y=[n]$. Then 
\[
\left| {Y^A}/G \right| =\frac{1}{|G|}\sum _{g\in G} n^{c(g)}.
\]
\end{cor}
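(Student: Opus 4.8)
The plan is to obtain the statement immediately from Theorem \ref{thm:poly_poset} by specializing to the case in which the poset is an antichain, since then both order preserving maps and quotient posets degenerate to their unrestricted counterparts.

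First I would observe that, because an antichain carries no nontrivial order relations, every map $\phi\colon A\to [n]$ is vacuously order preserving; hence $\Hom(A,[n])=[n]^A=Y^A$ as sets. The induced $G$-actions on $\Hom(A,[n])$ (viewed as a subset of $[n]^A$) and on $Y^A$ are given by the very same formula $(g\cdot f)(x)=f(g^{-1}x)$, so $\Hom(A,[n])/G=Y^A/G$ and therefore $\Omega_{A,G}(n)=|Y^A/G|$ for every $n\geq 1$.

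Next I would invoke equation \eqref{eqn:poly_poset} of Theorem \ref{thm:poly_poset}, which gives $|Y^A/G|=\Omega_{A,G}(n)=\frac{1}{|G|}\sum_{g\in G}\Omega_{A_g}(n)$. It then remains to compute $\Omega_{A_g}(n)$. By the Lemma preceding Theorem \ref{thm:poly_poset}, each $A_g=A/\langle g\rangle$ is a poset with $c(g)$ elements, and I claim it is again an antichain: if $[x]_g\prec [y]_g$ held in $A_g$ with $[x]_g\neq[y]_g$, then by definition of the quotient order there would be representatives $\tilde x\in[x]_g$ and $\tilde y\in[y]_g$ with $\tilde x\prec \tilde y$ in $A$, which is impossible as $\tilde x\neq \tilde y$ lie in the antichain $A$. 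Hence every map from the $c(g)$-element antichain $A_g$ into $[n]$ is order preserving, so $\Omega_{A_g}(n)=n^{c(g)}$ (this is the content of Theorem \ref{thm:stan_ord} for an antichain, or simply a count of all maps). Substituting into the displayed formula yields $|Y^A/G|=\frac{1}{|G|}\sum_{g\in G}n^{c(g)}$, as claimed.

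I do not expect any genuine obstacle here: the argument is a direct unwinding of definitions once Theorem \ref{thm:poly_poset} is available. The only step meriting an explicit line of justification is that the quotient of an antichain by a group action is again an antichain, which follows at once from the definition of the quotient order.
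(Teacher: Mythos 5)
Your proposal is correct and follows essentially the same route as the paper: specialize Theorem \ref{thm:poly_poset} to an antichain, note that $\Hom(A,[n])=Y^A$, and use that each quotient $A_g$ is an antichain with $c(g)$ elements so that $\Omega_{A_g}(n)=n^{c(g)}$. The only difference is that you spell out explicitly why $A_g$ is again an antichain, which the paper leaves as an observation.
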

\begin{proof}
As $A$ is an antichain we have $\Hom (A,[n])=Y^A$. The result follows by observing that $A_g$ is an antichain with $c(g)$ elements for all $g\in G$ and $\Omega _{A_g}(n)=n^{\left|A_g\right|}$.
\end{proof}
\begin{example}\label{ex:symm_group}
Let $S_k$ be the symmetric group acting on an antichain $A=\lbrace x_1,\ldots ,x_k\rbrace$ on $k$ elements by permuting indices, and let $Y=[n]$. Then
\[
\lbrace \phi \in Y^A \colon 1\leq \phi (x_1)\leq \cdots \leq \phi(x_k)\leq n\rbrace 
\]
is a set of representatives of $\Hom(A,Y)/{S_k}$ and therefore $\Omega _{A,S_k}(n) =\Omega _{[k]}(n)$. By Corollary \ref{cor:P\'{o}lya} we conclude that
\[
\binom {n+k-1}{k}=\frac{1}{k!}\sum _{\sigma \in S_k} n^{c(\sigma)}.
\]
\end{example}
\subsection{Combinatorial reciprocity}
Let $G$ be a finite group acting on a finite poset $P$ by automorphisms. As the number of orbits $\left| \Hom (P,[n])/G \right|$ agrees with a polynomial by Theorem \ref{thm:poly_poset} we ponder the question if there is a combinatorial interpretation for evaluating this polynomial at negative integers. For that we have to consider a certain class of order preserving maps. The \textbf{sign} $\sgn (g)$ of an element $g\in G$ is defined as the sign of $g$ as a permutation of $P$ and is equal to $(-1)^{|P|+c(g)}$. An order preserving map $\phi \in \Hom (P,[n])$ is called \textbf{even} if for all $g\in \Stab (\phi)$ we have $\sgn (g)=1$. The set of all even order preserving maps is denoted by $\Hom _{+} (P,[n])$, and we define $\Hom ^{\circ}_{+} (P,[n]):=\Hom _{+} (P,[n])\cap \Hom ^{\circ} (P,[n])$ to be the set of even strictly order preserving maps. One observes that the action of $G$ on $\Hom (P,[n])$ restricts to an action on $\Hom_{+} (P,[n])$. For these notions the following reciprocities hold: 
\begin{thm}\label{thm:reciprocity}
Let $G$ be a finite group acting by automorphisms on a finite poset $P$. Then 
\begin{eqnarray}
\Omega _{P,G}(-n)&=&(-1)^{|P|} \left| \Hom ^{\circ} _{+} (P,[n])/G \right|,\label{eq:recip}\\
\Omega ^{\circ}_{P,G}(-n)&=&(-1)^{|P|} \left| \Hom _{+} (P,[n])/G \right|.\label{eq:recipstrict}
\end{eqnarray}
\end{thm}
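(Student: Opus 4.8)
The plan is to combine the polynomial formula of Theorem~\ref{thm:poly_poset} with Stanley's reciprocity (Theorem~\ref{thm:stan_ord}) applied to each quotient poset, and then to establish a \emph{signed} version of Burnside's lemma. I will describe the argument for \eqref{eq:recip}; equation \eqref{eq:recipstrict} then follows by exchanging the roles of $\Hom$ and $\Hom^{\circ}$.

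First, since \eqref{eqn:poly_poset} is an identity of polynomials, it remains valid after substituting $-n$ for $n$, so $\Omega_{P,G}(-n)=\frac1{|G|}\sum_{g\in G}\Omega_{P_g}(-n)$. Applying Theorem~\ref{thm:stan_ord} to the poset $P_g$, which has $c(g)$ elements, gives $\Omega_{P_g}(-n)=(-1)^{c(g)}\Omega^{\circ}_{P_g}(n)$; using $\sgn(g)=(-1)^{|P|+c(g)}$ this becomes $(-1)^{|P|}\sgn(g)\,\Omega^{\circ}_{P_g}(n)$. Next I would record, exactly as in the proof of Theorem~\ref{thm:poly_poset} but for strict maps, that restriction to a set of cycle representatives gives a bijection $\Hom^{\circ}(P,[n])^g\to\Hom^{\circ}(P_g,[n])$; the only point to check is that a map constant on the cycles of $g$ is strictly order preserving on $P$ if and only if the induced map on $P_g$ is, which holds because each cycle $[x]_g$ is an antichain, so $x\prec y$ in $P$ forces $[x]_g\prec[y]_g$ in $P_g$. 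Hence $\Omega^{\circ}_{P_g}(n)=\bigl|\Hom^{\circ}(P,[n])^g\bigr|$ and
\[
\Omega_{P,G}(-n)=\frac{(-1)^{|P|}}{|G|}\sum_{g\in G}\sgn(g)\,\bigl|\Hom^{\circ}(P,[n])^g\bigr|.
\]

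It then remains to prove the following signed Burnside identity: for any finite group $G$ acting on a finite set $X$ one has $\frac1{|G|}\sum_{g\in G}\sgn(g)\,|X^g|=|X_{+}/G|$, where $X_{+}=\tset{x\in X}{\sgn(g)=1\text{ for all }g\in\Stab(x)}$. To see this I would interchange the order of summation, $\sum_{g\in G}\sgn(g)|X^g|=\sum_{x\in X}\sum_{g\in\Stab(x)}\sgn(g)$, and observe that for fixed $x$ the inner sum equals $|\Stab(x)|$ if the restriction of $\sgn$ to $\Stab(x)$ is trivial---i.e.\ if $x\in X_{+}$---and equals $0$ otherwise, since the sum of a nontrivial homomorphism $\Stab(x)\to\{\pm1\}$ over $\Stab(x)$ vanishes. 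Because $\sgn$ is a homomorphism into an abelian group it is constant on conjugacy classes, so $X_{+}$ is $G$-invariant and stabilizers of elements in a common orbit have equal size; the orbit--stabilizer theorem then gives $\sum_{x\in X_{+}}|\Stab(x)|=|G|\cdot|X_{+}/G|$, which is the claim. Applying this with $X=\Hom^{\circ}(P,[n])$---noting that $X_{+}=\Hom^{\circ}_{+}(P,[n])$ by the definition of even maps---yields \eqref{eq:recip}. For \eqref{eq:recipstrict} I would run the identical computation starting from \eqref{eqn:poly_posetstrict}, using $\Omega^{\circ}_{P_g}(-n)=(-1)^{c(g)}\Omega_{P_g}(n)$, the bijection $\Hom(P,[n])^g\to\Hom(P_g,[n])$, and the signed Burnside identity with $X=\Hom(P,[n])$, $X_{+}=\Hom_{+}(P,[n])$.

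I expect the main (though modest) obstacle to be the bookkeeping in the signed Burnside lemma---in particular verifying that $X_{+}$ is a $G$-invariant subset and that the character sum over each stabilizer subgroup is genuinely $|\Stab(x)|$ or $0$---together with checking that the fixed-point correspondence really preserves strictness; everything else is a direct combination of Theorems~\ref{thm:burnside}, \ref{thm:stan_ord}, and~\ref{thm:poly_poset}.
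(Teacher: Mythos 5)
Your proposal is correct and follows essentially the same route as the paper: substitute $-n$ into \eqref{eqn:poly_poset}, apply Theorem~\ref{thm:stan_ord} to each quotient poset $P_g$ with $|P_g|=c(g)$, identify $\Omega^{\circ}_{P_g}(n)$ with $\bigl|\Hom^{\circ}(P,[n])^g\bigr|$, and then show that the sign-weighted fixed-point count picks out exactly the orbits of even maps. Your ``signed Burnside lemma'' (character sum over each stabilizer plus orbit--stabilizer) is just a self-contained repackaging of the paper's final step, which instead kills the stabilizer sign-sums for non-even maps via the bijection $g\mapsto g_0g$ and then invokes ordinary Burnside.
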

\begin{proof}
Again, we only show equation \eqref{eq:recip} as equation \eqref{eq:recipstrict} follows by analogous arguments.
By equation \eqref{eqn:poly_poset} and Theorem \ref{thm:stan_ord} we have
\begin{equation}\label{eqn:poly_poset2}
\Omega _{P,G}(-n)=\frac{1}{|G|}\sum _{g\in G} (-1)^{|P_g|}\Omega ^{\circ} _{P_g} (n).
\end{equation}
We observe that $|P_g|=c(g)$ is the number of orbits under the action of $\langle g\rangle$. Therefore equation \eqref{eqn:poly_poset2} becomes
\begin{equation*}
\Omega _{P,G}(-n)=(-1)^{|P|}\frac{1}{|G|}\sum _{g\in G} \sgn (g)\left| \Hom^{\circ} (P,[n])^g\right|=(-1)^{|P|}\frac{1}{|G|}\sum _{\phi \in \Hom^{\circ} (P,[n])} \sum _{g\in \Stab (\phi)} \sgn (g).
\end{equation*}
For $\phi \in \Hom ^{\circ} (P,[n])$ and $g_0 \in \Stab (\varphi)$ such that $\sgn (g_0)=-1$ there is a bijection
\begin{eqnarray*}
\lbrace g\in \Stab (\phi) \colon \sgn (g)=1 \rbrace & \longrightarrow & \lbrace g\in \Stab (\phi) \colon \sgn (g)=-1 \rbrace\\
g& \mapsto &g_0 g
\end{eqnarray*} 
Hence, $\sum _{g\in \Stab (\phi)} \sgn (g)=0$ whenever $\phi$ is not even.
Therefore the right-hand side of equation \eqref{eqn:poly_poset2} equals
\[
(-1)^{|P|}\frac{1}{|G|}\sum _{g\in G}\left| \Hom^{\circ} _+(P,[n])^g\right|
\]
which equals $(-1)^{|P|}\left| \Hom ^{\circ} _{+} (P,[n])/G \right|$ by Theorem \ref{thm:burnside}. 
\end{proof}
In the setting of P\'{o}lya's enumeration theorem the statement simplifies:
\begin{cor}
Let $G$ be a finite group acting on a finite antichain $A$, and let $Y=[n]$. Then 
\[
\Omega _{A,G}(-n) =(-1)^{|A|} \left| \Hom _{+} (A,[n])/G \right|
\]
\end{cor}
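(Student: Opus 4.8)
The plan is to obtain this statement as an immediate specialization of Theorem~\ref{thm:reciprocity}, in the same way that Corollary~\ref{cor:P\'{o}lya} specializes Theorem~\ref{thm:poly_poset}. The only point to observe is that when $P=A$ is an antichain there are no cover relations at all, so the conditions ``order preserving'' and ``strictly order preserving'' impose no constraint: every map $\phi\colon A\to[n]$ belongs to $\Hom(A,[n])$ and to $\Hom^{\circ}(A,[n])$, and in fact $\Hom(A,[n])=\Hom^{\circ}(A,[n])=Y^A$.

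First I would note that, as a consequence, $\Hom^{\circ}_{+}(A,[n])=\Hom_{+}(A,[n])$: the defining property of an even map---that $\sgn(g)=1$ for every $g\in\Stab(\phi)$---refers only to the map $\phi$ and to the $G$-action on $A$, not to whether the relations in the target chain are strict. Moreover the induced $G$-action on this common set is in both cases just the restriction of the action on $Y^A=[n]^A$, so the orbit sets $\Hom^{\circ}_{+}(A,[n])/G$ and $\Hom_{+}(A,[n])/G$ literally coincide. Then I would apply equation~\eqref{eq:recip} of Theorem~\ref{thm:reciprocity} with $P=A$ to conclude
\[
\Omega_{A,G}(-n) \;=\; (-1)^{|A|}\,\bigl|\Hom^{\circ}_{+}(A,[n])/G\bigr| \;=\; (-1)^{|A|}\,\bigl|\Hom_{+}(A,[n])/G\bigr|,
\]
which is exactly the claim.

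There is essentially no obstacle here; the entire content is the identification $\Hom^{\circ}(A,[n])=\Hom(A,[n])$ for an antichain. If one preferred a self-contained argument not routed through Theorem~\ref{thm:reciprocity}, one could instead begin from Corollary~\ref{cor:P\'{o}lya}, namely $\Omega_{A,G}(n)=\frac{1}{|G|}\sum_{g\in G}n^{c(g)}$, substitute $n\mapsto -n$, use $\sgn(g)=(-1)^{|A|+c(g)}$ to rewrite the resulting sum as $(-1)^{|A|}\frac{1}{|G|}\sum_{g\in G}\sgn(g)\,\bigl|(Y^A)^g\bigr|$, and then run the Burnside's lemma computation (Theorem~\ref{thm:burnside}) together with the sign-cancellation bijection on $\Stab(\phi)$ for non-even $\phi$, exactly as in the proof of Theorem~\ref{thm:reciprocity}. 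Invoking that theorem directly is the cleaner route, so that is what I would write.
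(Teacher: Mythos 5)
Your proof is correct and follows the same route as the paper: the paper's own argument is exactly the one-line observation that every order preserving map from an antichain is automatically strictly order preserving, so $\Hom^{\circ}_{+}(A,[n])=\Hom_{+}(A,[n])$ and equation~\eqref{eq:recip} of Theorem~\ref{thm:reciprocity} gives the claim.
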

\begin{proof}
This follows from the fact that every order preserving map from an antichain is automatically strictly order preserving.
\end{proof}
\begin{example}
In Example \ref{ex:symm_group} we have $\phi \in \Hom _{+} (A,[n])$ if and only if $\phi$ is injective. Therefore 
\[
\lbrace \phi \in Y^A \colon 1\leq \phi (x_1)< \cdots < \phi(x_k)\leq n\rbrace 
\]
is a set of representatives for $\Hom _{+} (A,[n])/S_k$. This is reflected by the fact that $(-1)^k\Omega _{[k]}(-n)$ equals by  Theorem \ref{thm:stan_ord} the number of strictly order preserving maps from $[k]$ to $[n]$.
\end{example}
\begin{rmk}
An alternative, geometric route is by way of Ehrhart theory of order polytopes. Geometrically the setting can be translated into counting lattice points in order polytopes where the action of the group is given by permuting coordinates. This complements results by Stapledon \cite{stapledon2011equivariant} who considers lattice preserving group actions and counts lattice points inside stable rational polytopes.
\end{rmk}
\section{Graphs}
Let $\Gamma=(V,E)$ be a finite simple graph and let $G$ be a finite group acting on $\Gamma$ by automorphisms. A \textbf{$n$-coloring} of $\Gamma$ is a map $c\colon V \rightarrow [n]$. The coloring is called \textbf{proper} if $c(v)\neq c(w)$ whenever there is an edge between $v$ and $w$. The action of $G$ on $\Gamma$ induces an action on the set of all colorings, and also on the set of all proper colorings which we denote by $\Col _n (\Gamma)$. The \textbf{orbital chromatic polynomial} $\chi _{\Gamma,G}$ is defined by $\chi _{\Gamma,G}(n)=\left| \Col _n (\Gamma)/G\right|$ for all $n\geq 1$. An \Emph{orientation} $\sigma\colon E\rightarrow V$ of $\Gamma$ assigns to every edge $e$ a vertex of $e$ called its head. An orientation is \Emph{acyclic} if there are no directed cycles. Every acyclic orientation $\sigma$ induces a partial order on the vertex set of $\Gamma$ by defining $v\prec _{\sigma}w$ if there is a directed path from $v$ to $w$. For the corresponding poset we write $\Gamma ^{\sigma}$. $G$ acts on the set $\Sigma$ of all acyclic orientations of $\Gamma$: For an edge $uv$ we define $(g\cdot \sigma )(uv)=g\cdot\sigma (g^{-1} \cdot uv)$.
The next theorem gives us an expression of $\chi _{\Gamma,G}(n)$ in terms of order polynomials. In particular, $\chi _{\Gamma,G}(n)$ is a polynomial for all $n\geq 1$.
\begin{thm}\label{thm:poly_graph}
Let $\Gamma$ be a graph and let $G$ be a group acting on $\Gamma$. Then $G$ acts on $\Col _n (\Gamma)$ and we have 
\[
\chi _{\Gamma,G}(n) =\frac{1}{|G|} \sum _{g\in G} \sum _{\sigma \in \Sigma ^g}  \Omega ^{\circ} _{\Gamma^{\sigma}_g} (n)
\]
for all $n\geq 1$. In particular, $\chi _{\Gamma,G}(n)$ agrees with a polynomial of degree $|\Gamma|$ for all $n\geq 1$.
\end{thm}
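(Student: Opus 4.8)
The plan is to combine Burnside's lemma (Theorem~\ref{thm:burnside}) with an equivariant refinement of Stanley's bijection between proper colorings and pairs consisting of an acyclic orientation together with a strictly order preserving map, and then to pass to quotient posets exactly as in the proof of Theorem~\ref{thm:poly_poset}. (That $G$ acts on $\Col_n(\Gamma)$ at all is immediate: if $c$ is proper then so is $g\cdot c$, since $g$ is a graph automorphism.)

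First I would apply Theorem~\ref{thm:burnside} to the action of $G$ on $\Col_n(\Gamma)$, which reduces the claim to the identity
\[
\left|\Col_n(\Gamma)^g\right| \;=\; \sum_{\sigma\in\Sigma^g}\Omega^\circ_{\Gamma^\sigma_g}(n)
\]
for each fixed $g\in G$. Here one first checks that $g$ acts by automorphisms on the poset $\Gamma^\sigma$ whenever $\sigma\in\Sigma^g$: being a graph automorphism that fixes $\sigma$, $g$ maps directed paths of $\sigma$ to directed paths of $g\cdot\sigma=\sigma$, so $v\prec_\sigma w$ forces $g\cdot v\prec_\sigma g\cdot w$. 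Thus $\Gamma^\sigma_g=\Gamma^\sigma/\langle g\rangle$ is a poset, and, verbatim as in the proof of Theorem~\ref{thm:poly_poset} (strictly order preserving maps on $\Gamma^\sigma$ that are constant on $\langle g\rangle$-cycles correspond bijectively to strictly order preserving maps on the quotient, using that cycles are antichains), $\left|\Hom^\circ(\Gamma^\sigma,[n])^g\right|=\Omega^\circ_{\Gamma^\sigma_g}(n)$.

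It remains to produce a bijection between $\Col_n(\Gamma)^g$ and the set of pairs $(\sigma,\phi)$ with $\sigma\in\Sigma^g$ and $\phi\in\Hom^\circ(\Gamma^\sigma,[n])^g$. To a proper coloring $c$ I assign the orientation $\sigma_c$ that makes the head of each edge the endpoint with the larger $c$-value; $\sigma_c$ is acyclic because $c$ is then strictly increasing along directed paths, and this same observation shows that $c$, read as a map on $\Gamma^{\sigma_c}$, lies in $\Hom^\circ(\Gamma^{\sigma_c},[n])$. Conversely a pair $(\sigma,\phi)$ produces the coloring $\phi\colon V\to[n]$, which is proper since the two endpoints of any edge are $\prec_\sigma$-comparable and hence receive distinct values; these two assignments are mutually inverse (the orientation induced by the coloring $\phi$ is $\sigma$ again, because $\phi$ increases along every directed edge of $\sigma$). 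The point to check is equivariance. If $g\cdot c=c$, i.e.\ $c(g^{-1}\cdot v)=c(v)$ for all $v$, then $g\cdot\sigma_c=\sigma_c$: the head of $uv$ under $g\cdot\sigma_c$ is $g\cdot\sigma_c(g^{-1}\cdot uv)$, and replacing $c(g^{-1}\cdot u),c(g^{-1}\cdot v)$ by $c(u),c(v)$ shows this equals the head of $uv$ under $\sigma_c$; moreover $c$ is then $g$-invariant also as an element of $\Hom^\circ(\Gamma^{\sigma_c},[n])$. In the other direction, $\phi$ being $g$-invariant makes the coloring $\phi$ $g$-invariant. So the bijection restricts to fixed points, splits according to $\sigma$, and together with the previous step gives the displayed identity; summing over $g$ and dividing by $|G|$ yields the formula.

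Finally, each summand $\Omega^\circ_{\Gamma^\sigma_g}(n)$ is a polynomial of degree $|\Gamma^\sigma_g|=c(g)\le|V|=|\Gamma|$ by Theorem~\ref{thm:stan_ord}, so $\chi_{\Gamma,G}(n)$ agrees with a polynomial of degree at most $|\Gamma|$; the degree-$|\Gamma|$ contribution comes exactly from the $g$ that act trivially on $V$ (which includes $g=e$), for which $\Sigma^g=\Sigma$ and $\Gamma^\sigma_g=\Gamma^\sigma$, so the top-degree part is a positive multiple of the ordinary chromatic polynomial $\sum_{\sigma\in\Sigma}\Omega^\circ_{\Gamma^\sigma}(n)$ of $\Gamma$, which is monic of degree $|\Gamma|$; hence no cancellation occurs and the degree is exactly $|\Gamma|$. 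The one genuinely delicate step is the equivariance check $g\cdot c=c\Rightarrow g\cdot\sigma_c=\sigma_c$; the rest is bookkeeping assembling Theorems~\ref{thm:burnside}, \ref{thm:poly_poset} and~\ref{thm:stan_ord}.
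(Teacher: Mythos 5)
Your proof is correct and follows essentially the same route as the paper: Burnside's lemma followed by the $g$-equivariant Stanley decomposition of a $g$-fixed proper coloring into its induced ($g$-fixed) acyclic orientation $\sigma$ together with a $g$-fixed strictly order preserving map on $\Gamma^{\sigma}$, counted via the quotient poset exactly as in Theorem~\ref{thm:poly_poset}. You merely spell out the equivariance checks, the fact that $g$ acts by poset automorphisms on $\Gamma^{\sigma}$, and the degree argument, all of which the paper leaves implicit.
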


\begin{proof}
By Theorem \ref{thm:burnside} we have 
\[
 \left| \Col _n (\Gamma)/G\right| =\frac{1}{|G|}\sum _{g\in G} \left| \Col _n(\Gamma)^g\right|
\]
Let $\phi$ be an element of $\Col _n(\Gamma)^g$ and let $\sigma$ be the acyclic orientation induced by the coloring $\phi$, i.e.\ an edge $e = uv$ is oriented from $u$ to $v$ whenever $\phi(u) < \phi(v)$. Then $\phi$ is a strictly order preserving map from $\Gamma^{\sigma}$ into $[n]$ and $\sigma$ is fixed by $g$, because for every edge $vw\in E$ we have $v\prec _{\sigma} w$ by definition if and only if $\phi (v)< \phi (w)$, and as $\phi$ is fixed by $g$, this implies $\phi (gv)< \phi (gw)$ which is equivalent to $gv\prec _{\sigma} gw$, i.e. $\sigma \in \Sigma ^g$. 
\end{proof}

\begin{example}
Let $k>2$. We consider a cycle $C^k$ on $k$ vertices $\lbrace x_i\rbrace _{i\in \mathbb{Z}_k}$. Then its symmetry group is the dihedral group
\[
D_k \ = \ \langle r,s \mid r^k=1 , s^2 =1 , srs^{-1}=r^{-1}\rangle
\]
which acts on $C^k$ by
\begin{eqnarray*}
r \cdot x_i &=&x_{i+1}\\
s\cdot x_i &=& x_{-i}
\end{eqnarray*}
Then
\[
\chi _{C^k,D_k}(n)=\frac{1}{2k}\left(\sum _{l=1}^k \left|  \Col _n(C^k)^{r^l} \right| +\sum _{l=1}^k \left|  \Col _n(C^k)^{s r^l} \right|\right).
\]
Let $c\in \Col _n(C^k)$. If $l=2q$ is even, then $sr^{2q} \cdot c =c \Leftrightarrow (r^q \cdot c)=s\cdot(r^q \cdot c)$ and therefore
\[
\left|  \Col _n(C^k)^{sr^l} \right|=\left|  \Col _n(C^k)^{s} \right|=\begin{cases} 
\left|  \Col _n([\frac{k}{2}+1]) \right| & \text{ if }k \text{ is even,}\\
0 & \text{ otherwise.}
\end{cases}
\]
If $l=2q+1$ is odd, then $sr^{2q+1} \cdot c =c \Leftrightarrow (r^{q+1} \cdot c)=rs\cdot (r^{q+1} \cdot c)$ and therefore
\[
\left|  \Col _n(C^k)^{s\cdot r^l} \right|=\left|  \Col _n(C^k)^{rs} \right|=0
\]
as $rs\cdot x_0=x_1$, and $x_0$ and $x_1$ are connected by an edge.
Further, for all $1\leq l \leq k$ we obtain
\[
\left|  \Col _n(C^k)^{r^l} \right|=\begin{cases} 
\left|  \Col _n(C^{m}) \right| & \text{ if }m=\gcd (l,k)\neq 1,\\
0 & \text{ otherwise.} 
\end{cases}
\]
If $k>1$ is odd, we therefore get 
\[
\chi _{C^k,D_k}(n)=\frac{1}{2}\chi _{C^k, \mathbb{Z}_k} (n),
\]
with $\mathbb{Z}_k:=\mathbb{Z}/k\mathbb{Z}=\langle r \rangle \subset D_k$. If $k=p> 2$ is a prime this simplifies even further:
\[
\left| \Col _n(C^p)/D_p \right| =\frac{1}{2p} \left|  \Col _n(C^{p}) \right|.
\]
This example is reminiscient of counting necklaces with colored beads (see e.g. \cite[Chapter 35]{MR1871828}).
\end{example}
A pair $(c,\sigma)$ consisting of a coloring $c\colon V\rightarrow [n]$ and an acyclic orientation $\sigma \colon E \rightarrow V$ is called \textbf{weakly compatible} if for every edge $e = uv$ we have $\sigma (uv)=v$ whenever $c(u) < c(v)$.
We define 
\[
\Sigma _n (\Gamma)=\{ (c,\sigma)\in [n]^V \times \Sigma \colon  \text{weakly compatible}\}
\]
We observe that if $G$ acts on $\Gamma$ by automorphisms it also acts on $\Sigma _n (\Gamma)$ by $g\cdot (c,\sigma)=(g\cdot c, g\cdot \sigma)$ for all $(c,\sigma)\in\Sigma _n (\Gamma)$ and $g\in G$. An element $(c,\sigma)\in \Sigma _n (\Gamma)$ is called \textbf{even} if for all $g\in \Stab((c,\sigma))$ we have $\sgn (g)=1$ as permutation of the vertices. We denote the set of all even elements of $\Sigma _{n} (\Gamma)$ by $\Sigma _{n,+} (\Gamma)$. The action of $G$ restricts to an action on $\Sigma _{n,+} (\Gamma)$. We get the following reciprocity statement:

\begin{thm}\label{thm:reciprocity2}
Let $\Gamma$ be a graph and $G$ a group acting on $\Gamma$. Then 
\[
\chi _{\Gamma, G}(-n)=(-1)^{|\Gamma|} \left|\Sigma _{n,+} (\Gamma)/G\right|
\]
\end{thm}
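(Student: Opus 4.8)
The plan is to run the argument from the proof of Theorem~\ref{thm:reciprocity}, now with Theorem~\ref{thm:poly_graph} playing the role of equation~\eqref{eqn:poly_poset}. Since $\chi_{\Gamma,G}$ agrees with a polynomial and each $\Omega^{\circ}_{\Gamma^\sigma_g}$ is a polynomial, the formula of Theorem~\ref{thm:poly_graph} is a polynomial identity and may be evaluated at $-n$. Applying Stanley's reciprocity (Theorem~\ref{thm:stan_ord}) summand-wise gives $\Omega^{\circ}_{\Gamma^\sigma_g}(-n) = (-1)^{|\Gamma^\sigma_g|}\Omega_{\Gamma^\sigma_g}(n)$, and because $|\Gamma^\sigma_g| = c(g)$ while $\sgn(g) = (-1)^{|\Gamma|+c(g)}$, this produces
\[
\chi_{\Gamma,G}(-n)\;=\;(-1)^{|\Gamma|}\,\frac{1}{|G|}\sum_{g\in G}\sgn(g)\sum_{\sigma\in\Sigma^g}\Omega_{\Gamma^\sigma_g}(n).
\]

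Next I would unpack the combinatorial content of the inner terms. Exactly as in the proof of Theorem~\ref{thm:poly_poset}, restricting to maps constant on the cycles of $g$ gives $\Omega_{\Gamma^\sigma_g}(n) = |\Hom(\Gamma^\sigma,[n])^g|$ for each $\sigma\in\Sigma^g$. I then claim that, for acyclic $\sigma$, a map $c\in[n]^V$ lies in $\Hom(\Gamma^\sigma,[n])$ if and only if $(c,\sigma)$ is weakly compatible: a map is order preserving on $\Gamma^\sigma$ exactly when it is nondecreasing along every directed edge of $\sigma$ (it suffices to check edges rather than all directed paths), and inspecting the three cases $c(u)<c(v)$, $c(u)=c(v)$, $c(u)>c(v)$ for an edge with endpoints $u,v$ shows this is precisely the weak-compatibility condition. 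Since $g$ fixes a pair $(c,\sigma)$ iff it fixes $\sigma$ and fixes $c$, this yields $\sum_{\sigma\in\Sigma^g}|\Hom(\Gamma^\sigma,[n])^g| = |\Sigma_n(\Gamma)^g|$; note that acyclicity is built into both $\Sigma^g$ and $\Sigma_n(\Gamma)$, so it introduces no mismatch.

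Substituting and interchanging the order of summation gives
\[
\chi_{\Gamma,G}(-n)\;=\;(-1)^{|\Gamma|}\,\frac{1}{|G|}\sum_{(c,\sigma)\in\Sigma_n(\Gamma)}\ \sum_{g\in\Stab((c,\sigma))}\sgn(g).
\]
Now I would invoke the same pairing argument as in the proof of Theorem~\ref{thm:reciprocity}: for a pair $(c,\sigma)$ that is not even, left-multiplication by a fixed $g_0\in\Stab((c,\sigma))$ with $\sgn(g_0)=-1$ is a sign-reversing bijection of $\Stab((c,\sigma))$, so the inner sum vanishes, whereas for even pairs it equals $|\Stab((c,\sigma))|$. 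Hence the double sum reduces to $\sum_{(c,\sigma)\in\Sigma_{n,+}(\Gamma)}|\Stab((c,\sigma))| = \sum_{g\in G}|\Sigma_{n,+}(\Gamma)^g|$, and dividing by $|G|$ and applying Burnside's lemma (Theorem~\ref{thm:burnside}) to the $G$-set $\Sigma_{n,+}(\Gamma)$ gives $\chi_{\Gamma,G}(-n) = (-1)^{|\Gamma|}|\Sigma_{n,+}(\Gamma)/G|$. The only step requiring real care — and hence the main obstacle — is the equivalence between weak compatibility and being order preserving on $\Gamma^\sigma$, since the definition of weak compatibility is phrased asymmetrically in the two endpoints of an edge; everything else is sign bookkeeping together with a cancellation argument already developed earlier in the paper.
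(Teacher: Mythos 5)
Your proposal is correct and follows essentially the same route as the paper's proof: evaluate the formula of Theorem~\ref{thm:poly_graph} at $-n$, apply Theorem~\ref{thm:stan_ord} summand-wise with $\sgn(g)=(-1)^{|\Gamma|+c(g)}$, identify $\sum_{\sigma\in\Sigma^g}|\Hom(\Gamma^\sigma,[n])^g|$ with $|\Sigma_n(\Gamma)^g|$, and finish with the sign-reversing pairing on stabilizers plus Burnside's lemma. The only difference is that you spell out the equivalence between weak compatibility and being order preserving on $\Gamma^\sigma$, which the paper merely ``observes'' in equation~\eqref{eqn:graphrec}, and your verification of that equivalence is sound.
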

\begin{proof}
By Theorem \ref{thm:poly_graph} and Theorem \ref{thm:stan_ord} and \mbox{$\sgn (g) =(-1)^{|\Gamma|+c(g)}$} we have
\[
\chi _{\Gamma, G}(-n)=(-1)^{\left|\Gamma\right|}\frac{1}{|G|}\sum _{g\in G}\sgn (g)\sum _{\sigma \in \Sigma ^g} \Omega _{\Gamma^{\sigma}_g} (n).
\]
As in the proof of Theorem \ref{thm:poly_poset} we see $\Omega _{\Gamma^{\sigma}_g} (n)= |\Hom (\Gamma ^{\sigma} , [n]) ^g |$, and we observe
\begin{equation}\label{eqn:graphrec}
\left| \Sigma _n (\Gamma)^g\right| =\sum _{\sigma \in \Sigma ^g} |\Hom (\Gamma ^{\sigma} , [n]) ^g |.
\end{equation}
Now we argue the same way as in the proof of Theorem \ref{thm:reciprocity}: By equation \eqref{eqn:graphrec} we get
\begin{equation}\label{eqn:graphrec2}
\chi _{\Gamma,G}(-n)=(-1)^{\left|\Gamma\right|}\frac{1}{|G|}\sum _{g\in G}\sgn g\left| \Sigma _n(\Gamma)^g\right|=(-1)^{\left|\Gamma\right|}\frac{1}{|G|}\sum _{(c,\sigma)\in\Sigma _n(\Gamma)}\sum _{g\in \Stab ((c,\sigma))} \sgn (g) .
\end{equation}
For $(c,\sigma) \in \Sigma _n(\Gamma)$ and $g_0 \in \Stab((c,\sigma))$ such that  $\sgn g_0=-1$ as permutation of the vertices there is a bijection
\begin{eqnarray*}
\lbrace g\in \Stab ((c,\sigma)) \colon \sgn (g)=1 \rbrace & \longrightarrow & \lbrace g\in \Stab ((c,\sigma)) \colon \sgn (g)=-1 \rbrace\\
g& \mapsto &g_0 g
\end{eqnarray*} 
Hence, $\sum _{g\in \Stab ((c,\sigma))} \sgn (g)=0$ whenever $(c,\sigma)$ is not even.
Therefore the right hand side of equation \eqref{eqn:graphrec2} equals
\[
(-1)^{|\Gamma|}\frac{1}{|G|}\sum _{g\in G}\left| \Sigma _{n,+}(\Gamma)^g\right|
\]
which by Theorem \ref{thm:burnside} equals $(-1)^{|\Gamma|}\left| \Sigma_{n,+}(\Gamma)/G \right|$.
\end{proof}

An easy interpretation can be given in the case of $G=\mathbb{Z}_2$:
\begin{cor}
Let $\Gamma$ be a graph and let $\mathbb{Z}_2=\lbrace e,\tau\rbrace$ act on $\Gamma$ by automorphisms such that $\sgn \tau=-1$. Then
\[
\chi _{\Gamma,\mathbb{Z}_2 }(-1)=(-1)^{| \Gamma|} \frac{\left| \Sigma _+ \right|}{2}
\]
where $\Sigma _+=\Sigma _{1,+}(\Gamma)$ is the set of even acyclic orientations of $\Gamma$.
\end{cor}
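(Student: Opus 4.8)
The plan is to specialize Theorem~\ref{thm:reciprocity2} to $n=1$ and $G=\mathbb{Z}_2$ and then to analyze the resulting orbit count. First I would observe that $[1]=\{1\}$, so the only $1$-coloring $c\colon V\to[1]$ is the constant one; consequently for every edge $e=uv$ the condition $c(u)<c(v)$ is never satisfied, and weak compatibility becomes a vacuous condition. Hence $(c,\sigma)\mapsto\sigma$ is a bijection between $\Sigma_1(\Gamma)$ and the set $\Sigma$ of all acyclic orientations of $\Gamma$, and it is $\mathbb{Z}_2$-equivariant because the action on $\Sigma_1(\Gamma)$ operates on the second coordinate exactly as the action on $\Sigma$ and fixes the (unique) first coordinate. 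Under this identification $\Stab((c,\sigma))=\Stab(\sigma)$, so the definition of an even pair matches the definition of an even acyclic orientation, and $\Sigma_{1,+}(\Gamma)$ corresponds precisely to the set $\Sigma_+$ from the statement.

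Next I would apply Theorem~\ref{thm:reciprocity2}, obtaining
\[
\chi_{\Gamma,\mathbb{Z}_2}(-1)=(-1)^{|\Gamma|}\,\bigl|\Sigma_{1,+}(\Gamma)/\mathbb{Z}_2\bigr|=(-1)^{|\Gamma|}\,\bigl|\Sigma_+/\mathbb{Z}_2\bigr|.
\]
It then remains to show $\bigl|\Sigma_+/\mathbb{Z}_2\bigr|=|\Sigma_+|/2$, i.e.\ that $\mathbb{Z}_2$ acts freely on $\Sigma_+$. Suppose some $\sigma\in\Sigma_+$ were fixed by $\tau$; then $\tau\in\Stab(\sigma)$, but by definition of evenness every element of $\Stab(\sigma)$ has sign $+1$, contradicting $\sgn\tau=-1$. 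Hence $\tau$ has no fixed points on $\Sigma_+$, so the two-element group acts freely, every orbit has size $2$, and $\bigl|\Sigma_+/\mathbb{Z}_2\bigr|=|\Sigma_+|/2$, which gives the claimed identity.

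Since this is a straightforward specialization, I do not anticipate a genuine obstacle. The only points requiring a moment's care are the reduction of weak compatibility to a vacuous condition at $n=1$ and the check that the $\mathbb{Z}_2$-action on $\Sigma_1(\Gamma)$ is carried onto the $\mathbb{Z}_2$-action on acyclic orientations, so that ``even pair'' and ``even orientation'' genuinely coincide; everything else—the freeness of the action and the consequent halving of the count—is immediate from the definitions.
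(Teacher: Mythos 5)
Your proof is correct and follows exactly the route the paper intends: the corollary is stated as an immediate specialization of Theorem~\ref{thm:reciprocity2} at $n=1$, with the identification $\Sigma_{1,+}(\Gamma)=\Sigma_+$ built into the statement, and the only extra content is your (correct) observation that $\sgn\tau=-1$ forces $\tau$ to act freely on $\Sigma_+$, so each orbit has size two and $\left|\Sigma_+/\mathbb{Z}_2\right|=\left|\Sigma_+\right|/2$.
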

For $G$ acting trivially on $\Gamma$ we recover a well-known theorem by Stanley:
\begin{cor}[{\cite[Thm.~1.2]{stanley1973acyclic}}] 
    Let $\Gamma$ be a graph and $\chi_{\Gamma}$ its chromatic polynomial. Then $|\chi_{\Gamma}(-n)|$ equals the number of weakly compatible pairs $(c,\sigma)$
    consisting of a $n$-coloring $c$ and an acyclic orientation $\sigma$ of $\Gamma$. In particular, $|\chi_{\Gamma}(-1)|$ is the number of
    acyclic orientations of $\Gamma$.
\end{cor}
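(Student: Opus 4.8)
The plan is to derive this as the special case of Theorem~\ref{thm:reciprocity2} in which $G=\{e\}$ is the trivial group acting on $\Gamma$. First I would record that under this specialization the orbital chromatic polynomial collapses to the classical one: since $G$ is trivial, every orbit is a singleton, so $\Col_n(\Gamma)/G=\Col_n(\Gamma)$ and hence $\chi_{\Gamma,G}(n)=|\Col_n(\Gamma)|=\chi_\Gamma(n)$ for all $n\ge 1$. By Theorem~\ref{thm:poly_graph}, $\chi_{\Gamma,G}$ agrees with a polynomial for $n\ge 1$, so this polynomial coincides with the chromatic polynomial $\chi_\Gamma$ (two polynomials agreeing at all positive integers are equal), and in particular the evaluations at negative integers match.

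Next I would analyze the right-hand side $|\Sigma_{n,+}(\Gamma)/G|$ for trivial $G$. Again every orbit is a singleton, so $|\Sigma_{n,+}(\Gamma)/G|=|\Sigma_{n,+}(\Gamma)|$. Moreover the stabilizer of any pair $(c,\sigma)$ is $\{e\}$ and $\sgn(e)=1$, so the evenness condition is automatically satisfied; thus $\Sigma_{n,+}(\Gamma)=\Sigma_n(\Gamma)$, which by definition is exactly the set of weakly compatible pairs $(c,\sigma)$ consisting of an $n$-coloring $c$ and an acyclic orientation $\sigma$. Substituting into Theorem~\ref{thm:reciprocity2} gives $\chi_\Gamma(-n)=(-1)^{|\Gamma|}|\Sigma_n(\Gamma)|$, and taking absolute values yields $|\chi_\Gamma(-n)|=|\Sigma_n(\Gamma)|$, which is the first assertion.

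For the final clause I would specialize further to $n=1$. The only $1$-coloring is the constant map $c\equiv 1$, for which the weak-compatibility requirement ``$\sigma(uv)=v$ whenever $c(u)<c(v)$'' is vacuous, so every acyclic orientation $\sigma$ pairs with $c$ to form a weakly compatible pair. Hence $(c,\sigma)\mapsto\sigma$ is a bijection between $\Sigma_1(\Gamma)$ and the set of acyclic orientations of $\Gamma$, and therefore $|\chi_\Gamma(-1)|=|\Sigma_1(\Gamma)|$ counts acyclic orientations.

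I do not expect a genuine obstacle: all the substance is contained in Theorem~\ref{thm:reciprocity2}, and the remaining work is simply unwinding the definitions of $\chi_{\Gamma,G}$, of orbits, and of the evenness condition in the degenerate case $G=\{e\}$. The one point deserving a sentence of care is checking that ``weakly compatible'' as defined in this paper matches the notion in Stanley's original statement, so that the two counting problems are literally the same.
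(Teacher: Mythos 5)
Your proposal is correct and matches the paper's intent exactly: the paper presents this corollary as the specialization of Theorem~\ref{thm:reciprocity2} to the trivial group action, which is precisely what you carry out, including the observations that the evenness condition becomes vacuous and that for $n=1$ weakly compatible pairs correspond bijectively to acyclic orientations.
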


Similarly as in Theorem \ref{thm:reciprocity} there is a twin reciprocity in the case of graph colorings. We say that a $n$-coloring $c$ of $\Gamma$ is \textbf{even} if for all $g\in \Stab (c)$ we have $\sgn g=1$ and define $\Col _{n,+} (\Gamma)$ as the set of all even proper $n$-colorings of $\Gamma$. Then the action of $G$ on $\Col _{n} (\Gamma)$ restricts to an action on $\Col _{n,+} (\Gamma)$. We further define $\chi ^+ _{\Gamma, G}(n)=\left|\Col _{n,+} (\Gamma)/G\right|$ as the function counting the number of orbits of even proper $n$-colorings for $n\geq 1$. By similar arguments as in Theorem \ref{thm:poly_graph} and Theorem \ref{thm:reciprocity2} we then have the following:
\begin{prop}
Let $\Gamma$ be a graph and $G$ a group acting on $\Gamma$ by automorphisms. Then $\chi ^+ _{\Gamma,G}(n)$ agrees with a polynomial of degree $|\Gamma|$ for $n\geq 1$ and we have
\[
\chi ^+ _{\Gamma, G}(-n)=(-1)^{|\Gamma|} \left|\Sigma _{n} (\Gamma)/G\right|.
\]
\end{prop}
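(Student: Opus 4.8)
The plan is to run the arguments of Theorem~\ref{thm:poly_graph} and Theorem~\ref{thm:reciprocity2} in parallel, with $\Col_{n,+}(\Gamma)$ playing the role that the even (strictly) order preserving maps play in Theorem~\ref{thm:reciprocity}. (Since $\sgn$ is constant on conjugacy classes, evenness is a $G$-invariant property, so $G$ indeed acts on $\Col_{n,+}(\Gamma)$ and $\chi^+_{\Gamma,G}(n)$ is well defined.)

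I would first prove polynomiality. Burnside's lemma (Theorem~\ref{thm:burnside}), followed by double counting the pairs $(\phi,g)$ with $g\in\Stab(\phi)$, gives $\chi^+_{\Gamma,G}(n)=\frac{1}{|G|}\sum_{\phi\in\Col_{n,+}(\Gamma)}|\Stab(\phi)|$. For even $\phi$ one has $|\Stab(\phi)|=\sum_{g\in\Stab(\phi)}\sgn(g)$, while for non-even $\phi$ the sum $\sum_{g\in\Stab(\phi)}\sgn(g)$ vanishes by the bijection $g\mapsto g_0g$ used in the proofs of Theorems~\ref{thm:reciprocity} and~\ref{thm:reciprocity2}; hence, extending the outer sum to all proper colorings and double counting again, $\chi^+_{\Gamma,G}(n)=\frac{1}{|G|}\sum_{g\in G}\sgn(g)\,|\Col_n(\Gamma)^g|$. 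Substituting the decomposition $|\Col_n(\Gamma)^g|=\sum_{\sigma\in\Sigma^g}\Omega^{\circ}_{\Gamma^{\sigma}_g}(n)$ established inside the proof of Theorem~\ref{thm:poly_graph}, this becomes
\begin{equation*}
\chi^+_{\Gamma,G}(n)=\frac{1}{|G|}\sum_{g\in G}\sgn(g)\sum_{\sigma\in\Sigma^g}\Omega^{\circ}_{\Gamma^{\sigma}_g}(n),
\end{equation*}
a rational linear combination of strict order polynomials, hence a polynomial for $n\ge1$ by Theorem~\ref{thm:stan_ord}. Since $|\Gamma^{\sigma}_g|=c(g)$, each summand has degree $c(g)\le|\Gamma|$, and $c(g)=|\Gamma|$ forces $g$ to fix $V$ pointwise, whence $\sgn(g)=1$ and $\Sigma^g=\Sigma$; so the contributions of maximal degree all have positive leading coefficient and cannot cancel, giving $\deg\chi^+_{\Gamma,G}=|\Gamma|$ (using that $\Gamma$ admits at least one acyclic orientation).

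For the reciprocity I would evaluate this expression at $-n$. Stanley's reciprocity (Theorem~\ref{thm:stan_ord}) turns $\Omega^{\circ}_{\Gamma^{\sigma}_g}(-n)$ into $(-1)^{c(g)}\Omega_{\Gamma^{\sigma}_g}(n)$, and because $\sgn(g)=(-1)^{|\Gamma|+c(g)}$ the sign prefactors collapse to $\sgn(g)(-1)^{c(g)}=(-1)^{|\Gamma|}$, so that
\begin{equation*}
\chi^+_{\Gamma,G}(-n)=(-1)^{|\Gamma|}\frac{1}{|G|}\sum_{g\in G}\sum_{\sigma\in\Sigma^g}\Omega_{\Gamma^{\sigma}_g}(n).
\end{equation*}
Finally I would invoke equation~\eqref{eqn:graphrec}, which identifies $\sum_{\sigma\in\Sigma^g}\Omega_{\Gamma^{\sigma}_g}(n)$ with $\sum_{\sigma\in\Sigma^g}|\Hom(\Gamma^{\sigma},[n])^g|=|\Sigma_n(\Gamma)^g|$, and conclude with a last application of Burnside's lemma that $\chi^+_{\Gamma,G}(-n)=(-1)^{|\Gamma|}|\Sigma_n(\Gamma)/G|$.

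Essentially every ingredient is already present in the excerpt: the $G$-equivariant bijection behind $|\Col_n(\Gamma)^g|=\sum_\sigma\Omega^{\circ}_{\Gamma^{\sigma}_g}(n)$, the identity~\eqref{eqn:graphrec}, and the sign-cancellation lemma from the proofs of Theorems~\ref{thm:reciprocity} and~\ref{thm:reciprocity2}. The one step that warrants a little care is the degree statement: since the $\sgn$-weights could in principle cause cancellation at the top, one has to note that every $g$ with $c(g)=|\Gamma|$ satisfies $\sgn(g)=+1$, which is exactly the parity identity $\sgn(g)=(-1)^{|\Gamma|+c(g)}$. I expect this to be the only genuinely new — and still entirely routine — verification.
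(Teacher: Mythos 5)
Your proof is correct and follows exactly the route the paper intends, which it leaves implicit with the remark that the proposition follows ``by similar arguments as in Theorem~\ref{thm:poly_graph} and Theorem~\ref{thm:reciprocity2}'': Burnside plus the sign-cancellation over stabilizers, the decomposition $|\Col_n(\Gamma)^g|=\sum_{\sigma\in\Sigma^g}\Omega^{\circ}_{\Gamma^{\sigma}_g}(n)$, Stanley reciprocity, and equation~\eqref{eqn:graphrec}. Your extra observation that the $\sgn$-weights cannot cancel in top degree (since $c(g)=|\Gamma|$ forces $\sgn(g)=1$) correctly supplies the degree claim, which the paper glosses over.
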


\nocite{*}
\bibliographystyle{siam}
\bibliography{Polya2}
\end{document}